 \newtheorem{thm}{Theorem}[section]
 \newtheorem{corollary}[thm]{Corollary}
 \newtheorem{lemma}[thm]{Lemma}
 \newtheorem{Proposition}[thm]{Proposition}
 \theoremstyle{definition}
 \theoremstyle{remark}
 \newtheorem{remark}[thm]{Remark}
 \numberwithin{equation}{section}
 \newcommand{\R}{\mathbb{R}}
 \newcommand{\C}{\mathbb{C}}
\begin{document}

%
%
%
%
%
%
%
%
%

\title[ Equiaffine Characterization of Lagrangian Surfaces]
 {Equiaffine Characterization of Lagrangian \newline Surfaces in $\R^4$}

\author[M.Craizer]{Marcos Craizer}

\address{%
Catholic University \br
Rio de Janeiro\br
Brazil}

\email{craizer@puc-rio.br}

\thanks{The author want to thank CNPq for financial support during the preparation of this manuscript.}

\keywords{ Shape Operators, Cubic Forms, Affine Normal Plane Bundle}

\subjclass{ 53A15, 53D12}

\date{December 23, 2014}

\begin{abstract}
For non-degenerate surfaces in $\R^4$, a distinguished transversal bundle called affine normal plane bundle was proposed in \cite{Nomizu93}. 
Lagrangian surfaces have remarkable properties with respect to this normal bundle, like for example, the normal bundle being Lagrangian. 
In this paper we characterize those surfaces which are Lagrangian with respect to some parallel symplectic form in $\R^4$. 

\end{abstract}

\maketitle

\section{Introduction}

We consider non-degenerate surfaces $M^2\subset\R^4$. For such surfaces, there are many possible choices of the transversal bundle, 
and we consider here the {\it affine normal plane bundle} proposed in \cite{Nomizu93}. For affine mean curvature, umbilical surfaces and some other properties of this bundle we
refer to \cite{Dillen}, \cite{Magid}, \cite{Verstraelen} and \cite{Vrancken}. 
In this paper, considering the affine normal plane bundle, we give an equiaffine characterization of the Lagrangian surfaces. The results can be compared with 
\cite{Morvan87}, where a characterization of Lagrangian surfaces is given in terms of euclidean invariants of the surface. 

Consider the affine $4$-space $\R^4$ with the standard connection $D$ and a parallel volume form $[\cdot,\cdot,\cdot,\cdot]$.
Let $M\subset\R^4$ be a surface with a non-degenerate {\it Burstin-Mayer metric} $g$ (\cite{Burstin27}).
For a definite metric $g$, we write $\epsilon=1$, while for an indefinite metric $g$, we write $\epsilon=-1$. 
For a given transversal plane bundle $\sigma$ and $X,Y$ tangent vector fields,  write
\begin{equation}
D_XY=\nabla_XY+h(x,y),
\end{equation}
where $\nabla_XY$ is tangent to $M$ and $h(X,Y)\in\sigma$. Then $\nabla$ is a torsion free affine connection and $h$ is a symmetric bilinear form. 
For local vector fields $\{\xi_1,\xi_2\}$ defining a basis of $\sigma$, define the symmetric bilinear forms $h^1$ and $h^2$ by
\begin{equation}
h(X,Y)=h^1(X,Y)\xi_1+h^2(X,Y)\xi_2.
\end{equation}
Let $\{X_1,X_2\}$ be a local $g$-orthonormal tangent frame, i.e., $g(X_1,X_1)=\epsilon$, $g(X_1,X_2)=0$, $g(X_2,X_2)=1$.  For an arbitrary transversal plane bundle $\sigma$, it is proved in \cite{Nomizu93} that there exists a unique local basis $\{\xi_1,\xi_2\}$ 
of $\sigma$ such that
$[X_1,X_2,\xi_1,\xi_2]=1$ and
\begin{equation}\label{eq:Normalxi}
\begin{array}{c}
h^1(X_1,X_1)=1,\\
h^1(X_1,X_2)=0,\\
h^1(X_2,X_2)=-\epsilon,
\end{array}
\ \ \ \ 
\begin{array}{c}
h^2(X_1,X_1)=0,\\
h^2(X_1,X_2)=1,\\
h^2(X_2,X_2)=0.
\end{array}
\end{equation}
There are some transversal plane bundles $\sigma$ with distinguished properties, and we shall consider here the {\it affine normal plane bundle}
proposed in \cite{Nomizu93}. 

Assuming that $M$ is Lagrangian with respect to a parallel symplectic form $\Omega$, we shall verify the following remarkable facts:  (1) The affine normal plane bundle is $\Omega$-Lagrangian; 
(2) $\Omega\wedge\Omega=c[\cdot,\cdot,\cdot,\cdot]$, for some constant c; (3) $\Omega(X_1,\xi_2)-\Omega(X_2,\xi_1)=0$ and $\Omega(X_1,\xi_1)+\epsilon\Omega(X_2,\xi_2)=0$. Based on these 
facts, we shall describe the equiaffine conditions for a surface to be Lagrangian with respect to a parallel symplectic form.

Given a transversal bundle $\sigma$ and a local basis $\{\xi_1,\xi_2\}$, define the $1$-forms $\tau_i^j$, $i=1,2$, $j=1,2$, and the shape operators $S_i$ by
\begin{equation}\label{eq:shape}
D_X\xi_i=-S_iX+\tau_i^1(X)\xi_1+\tau_i^2(X)\xi_2,
\end{equation}
where $S_iX$ is in the tangent space. Writing
\begin{equation}\label{eq:definelambda}
S_iX_j=\lambda_{ij}^1X_1+\lambda_{ij}^2X_2,
\end{equation}
define 
\begin{equation}\label{eq:defineL}
\begin{array}{c}
L_{11}=\lambda_{11}^1-\lambda_{21}^2;\ \ L_{12}=-\epsilon\lambda_{11}^2-\lambda_{21}^1\\
L_{21}=\lambda_{12}^1-\lambda_{22}^2;\ \ L_{22}=-\epsilon\lambda_{12}^2-\lambda_{22}^1,
\end{array}
\end{equation}
and the $2\times 2$ matrix 
\begin{equation*}
L=\left[
\begin{array}{cc}
L_{11} & L_{12}\\
L_{21} & L_{22}
\end{array}
\right].
\end{equation*}
We shall verify that the rank of $L$ is independent of the choice of the $g$-orthonormal local frame $\{X_1,X_2\}$.

Consider the cubic forms $C^i$, $i=1,2$ given by
\begin{equation}\label{eq:defineCubic}
C^i(X,Y,Z)=\nabla_Xh^{i}(Y,Z)+\tau_1^{i}(X)h^1(Y,Z)+\tau_2^{i}(X)h^2(Y,Z).
\end{equation}
and define
\begin{equation}\label{eq:defineF}
\begin{array}{c}
F_{11}=3C^1(X_1,X_1,X_2)-\epsilon C^1(X_2,X_2,X_2)\\
F_{12}=\epsilon C^1(X_1,X_1,X_1)-3C^1(X_1,X_2,X_2)\\
F_{21}=3C^2(X_1,X_1,X_2)-\epsilon C^2(X_2,X_2,X_2)\\
F_{22}=\epsilon C^2(X_1,X_1,X_1)-3C^2(X_1,X_2,X_2).
\end{array}
\end{equation}
We shall verify that the rank of the matrix 
\begin{equation*}
F=\left[
\begin{array}{cc}
F_{11} & F_{12}\\
F_{21} & F_{22}
\end{array}
\right]
\end{equation*}
is also independent of the choice of the local $g$-orthonormal tangent frame $\{X_1,X_2\}$. In fact we shall prove that
the rank of the $2\times 4$ matrix 
$$
H=\left[\  L\  | \ F\ \right]
$$
is independent of the choice of the local frame. 

In case $rank(H)=1$, denote by $[A,B]^t$ a column-vector in the kernel of $H$ and let $\eta=\tan^{-1}(B/A)$, if $\epsilon=1$, and $\eta=\tanh^{-1}(B/A)$, if $\epsilon=-1$.
Define
\begin{equation}\label{eq:defineG}
\begin{array}{c}
G_1=\Gamma_{22}^2-\epsilon\Gamma_{11}^2+\tau_1^1(X_2)-\epsilon\tau_1^2(X_1)\\
G_2=\Gamma_{11}^1-\epsilon\Gamma_{22}^1-\tau_2^2(X_1)+\tau_1^2(X_2),
\end{array}
\end{equation}
where 
\begin{equation}\label{eq:defineChristoffel}
\nabla_{X_i}X_j=\Gamma_{ij}^1X_1+\Gamma_{ij}^2X_2. 
\end{equation}
We shall verify that, for the affine normal plane bundle, the conditions
\begin{equation}\label{eq:DerivEta}
\begin{array}{c}
d\eta(X_1)+\epsilon G_1=0\\
d\eta(X_2)-G_2=0,
\end{array}
\end{equation}
are independent of the choice of the local frame.

Our main theorem is the following:

\begin{thm}\label{thm1}
Given a surface $M\subset\R^4$, consider a local tangent frame $\{X_1,X_2\}$ and a local basis $\{\xi_1,\xi_2\}$ of the affine normal plane bundle $\sigma$ 
satisfying equations \eqref{eq:Normalxi}. 
\begin{enumerate}
\item Assume that there exists a parallel symplectic form $\Omega$ such that $L$ is $\Omega$-Lagrangian. Then the affine normal plane bundle is $\Omega$-Lagrangian, 
$\Omega\wedge\Omega=c[\cdot,\cdot,\cdot,\cdot]$, for some constant $c$ and $[A,B]^t$ belongs to the kernel of $H$, where $A=\Omega(X_1,\xi_2)=\Omega(X_2,\xi_1)$ and 
$B=\Omega(X_1,\xi_1)=-\epsilon\Omega(X_2,\xi_2)$. Moreover $\eta$ satisfies equations \eqref{eq:DerivEta}. 

\item If $rank(H)=1$ and $ker(H)$ satisfies equations \eqref{eq:DerivEta}, then there exists a parallel symplectic form $\Omega$ such that
$M$ is $\Omega$-Lagrangian. 
\end{enumerate}
\end{thm}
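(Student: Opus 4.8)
The plan is to derive every assertion from the single fact that $\Omega$ is parallel, $D\Omega=0$, applied to the Lagrangian condition $\Omega(X_1,X_2)=0$ and expanded in the adapted frame $\{X_1,X_2,\xi_1,\xi_2\}$. Writing $e_1=X_1$, $e_2=X_2$, $e_3=\xi_1$, $e_4=\xi_2$ and $\omega_{ab}=\Omega(e_a,e_b)$, parallelism reads $X_k\cdot\omega_{ab}=\Omega(D_{X_k}e_a,e_b)+\Omega(e_a,D_{X_k}e_b)$ for $k=1,2$, with each $D_{X_k}e_a$ supplied by the structure equations \eqref{eq:shape}, \eqref{eq:definelambda} and \eqref{eq:defineChristoffel} together with $D_{X_k}X_i=\nabla_{X_k}X_i+h(X_k,X_i)$. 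I would first differentiate $\omega_{12}=0$. Since $\nabla_{X_k}X_i$ is tangent and $M$ is Lagrangian the tangential contributions cancel, and using $h(X_1,X_1)=\xi_1$, $h(X_1,X_2)=\xi_2$, $h(X_2,X_2)=-\epsilon\xi_1$ from \eqref{eq:Normalxi}, the cases $k=1,2$ collapse to $\omega_{14}=\omega_{23}$ and $\omega_{24}=-\epsilon\omega_{13}$. This is statement (3) of the introduction and it defines $A=\omega_{14}=\omega_{23}$ and $B=\omega_{13}=-\epsilon\omega_{24}$. The identity $\Omega\wedge\Omega=c[\cdot,\cdot,\cdot,\cdot]$ is then immediate, because $\Omega\wedge\Omega$ is a parallel $4$-form hence a constant multiple of the volume form; evaluating on $(X_1,X_2,\xi_1,\xi_2)$ gives $c=2(A^2+\epsilon B^2)$, and nondegeneracy of $\Omega$ is exactly $c\neq0$.

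Next I would differentiate the four identities $\omega_{13}=B$, $\omega_{14}=A$, $\omega_{23}=A$, $\omega_{24}=-\epsilon B$. Each yields, on the one hand, an expression for $X_kA$ or $X_kB$ and, on the other, a purely algebraic relation among $A$, $B$ and $\omega_{34}$. Differentiating $\omega_{34}$ itself and imposing $\omega_{34}=0$ reproduces exactly $L_{k1}A+L_{k2}B=0$, that is $L[A,B]^t=0$, by the definition \eqref{eq:defineL} of $L$ in terms of the $\lambda_{ij}^\ell$; the algebraic consistency relations, after rewriting the Christoffel-and-$\tau$ combinations through the cubic forms — legitimate because the components $h^i(X_a,X_b)$ are constant in the orthonormal frame, so that $C^i$ in \eqref{eq:defineCubic} reduces to a combination of the $\Gamma_{ij}^\ell$ and the $\tau_i^j$ — become $F_{k1}A+F_{k2}B=0$, using \eqref{eq:defineF}. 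Together these say $[A,B]^t\in\ker H$. Finally, since
\[
d\eta(X_k)=\frac{A\,X_kB-B\,X_kA}{A^2+\epsilon B^2}
\]
in both signatures, substituting the expressions for $X_kA,X_kB$ (in which the $\omega_{34}$-terms drop once $\omega_{34}=0$) and comparing with \eqref{eq:defineG} produces precisely \eqref{eq:DerivEta}. The genuinely hard step is the vanishing of $\omega_{34}$, i.e. that the affine normal plane bundle is $\Omega$-Lagrangian: the first-order equations only express $\omega_{34}$ algebraically in terms of $A$ and $B$, and to see that this expression vanishes one must feed in the integrability (Gauss--Codazzi--Ricci) relations and the defining properties of the affine normal plane bundle recorded earlier. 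I expect this to be the main obstacle.

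For the converse (2), I would reverse the construction. Since $\mathrm{rank}(H)=1$, the kernel is one-dimensional and I normalize its generator to $(A,B)=(\cos\eta,\sin\eta)$ if $\epsilon=1$ and $(A,B)=(\cosh\eta,\sinh\eta)$ if $\epsilon=-1$, consistent with the definition of $\eta$ and with $A^2+\epsilon B^2=1$. I then define a $2$-form $\Omega$ along $M$ by prescribing its frame values $\omega_{12}=\omega_{34}=0$, $\omega_{14}=\omega_{23}=A$, $\omega_{13}=B$, $\omega_{24}=-\epsilon B$, and check that $D_{X_k}\Omega=0$ for $k=1,2$, which amounts to verifying the twelve scalar equations above. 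The algebraic ones hold because $(A,B)\in\ker H$ supplies the $L$- and $F$-relations and forces $X_k\omega_{34}=0$ compatibly with $\omega_{34}\equiv0$; the angular part of the first-order equations is exactly \eqref{eq:DerivEta}; and the radial part follows from differentiating $A^2+\epsilon B^2=1$, which gives $A\,X_kA+\epsilon B\,X_kB=0$. Hence $\Omega$ is parallel along $M$, and by flatness of $D$ it is the restriction to $M$ of a unique parallel $2$-form on $\R^4$. This form is nondegenerate, since $\Omega\wedge\Omega=2(A^2+\epsilon B^2)[\cdot,\cdot,\cdot,\cdot]=2[\cdot,\cdot,\cdot,\cdot]\neq0$, hence symplectic, and $M$ is $\Omega$-Lagrangian because $\omega_{12}=0$. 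Throughout, the frame-independence of $\mathrm{rank}(H)$ and of conditions \eqref{eq:DerivEta}, established earlier in the paper, is what lets me carry out all computations in a single convenient $g$-orthonormal frame.
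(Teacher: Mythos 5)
Your overall route is the paper's: expand $D\Omega=0$ in the adapted frame, read off $A$ and $B$ from differentiating $\Omega(X_1,X_2)=0$, extract the $F$- and $L$-relations and the $G$-relations from differentiating the remaining frame values, and reverse the construction for part (2) (where your appeal to flatness of $D$ to extend $\Omega$ to a parallel form on $\R^4$, and your use of the frame-independence results, match the paper). Your observation that $\Omega\wedge\Omega=c[\cdot,\cdot,\cdot,\cdot]$ is immediate because a parallel $4$-form is a constant multiple of the volume form is correct and in fact cleaner than the paper, which obtains $A^2+\epsilon B^2=c$ from the first-order system via its Lemma 4.1.

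However, there is a genuine gap at exactly the point you flag: you do not prove that $\Omega(\xi_1,\xi_2)=0$, i.e.\ that the affine normal plane bundle is $\Omega$-Lagrangian, and you guess the wrong mechanism for closing it. No Gauss--Codazzi--Ricci integrability conditions are needed. In the paper's computation, the two expressions for $dA(X_1)$ obtained by differentiating $\Omega(X_1,\xi_2)=A$ and $\Omega(X_2,\xi_1)=A$ along $X_1$ have an algebraic consistency relation which, after using the symmetry relations \eqref{eq:SymmetryC1}--\eqref{eq:SymmetryC2}, the volume condition \eqref{eq:Det1} and the affine-normal-bundle identities \eqref{eq:AffineBundle1}--\eqref{eq:AffineBundle2}, reads
\begin{equation*}
F_{21}A+F_{22}B+4\,\Omega(\xi_1,\xi_2)=0,
\end{equation*}
while the analogous consistency relation from differentiating $\Omega(X_1,\xi_1)=B$ and $-\epsilon\Omega(X_2,\xi_2)=B$ along $X_2$ reads
\begin{equation*}
F_{21}A+F_{22}B-4\,\Omega(\xi_1,\xi_2)=0.
\end{equation*}
Subtracting gives $\Omega(\xi_1,\xi_2)=0$ outright, and only then do the four algebraic relations collapse to $F[A,B]^t=0$ and does differentiating $\Omega(\xi_1,\xi_2)=0$ yield $L[A,B]^t=0$. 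So the resolution lives entirely inside the first-order relations you already wrote down; the missing step is to notice that $\omega_{34}$ enters two of the consistency relations with opposite signs and the same $F$-part. Until that is supplied, the claims that the normal bundle is Lagrangian, that $[A,B]^t\in\ker H$, and (since your $G$-equations presuppose dropping the $\omega_{34}$-terms) that \eqref{eq:DerivEta} holds, are all unestablished.
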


In order to complete the picture, it remains to consider what occurs under the hypothesis $H=0$. It is proved in \cite{Nomizu93} that, under the weaker hypothesis $F=0$, $M$ must be a complex curve,
if the metric $g$ is definite, or a product of planar curves, if $g$ is indefinite. In any case, it is well-known that there are two linearly independent parallel symplectic forms under which $M$ is Lagrangian
(see \cite{Craizer14}, \cite{Martinez05}). Thus we can write the following:

\begin{corollary}
A surface $M^2\subset\R^4$ is Lagrangian with respect to a parallel symplectic form if and only if $rank(H)=1$ and equations \eqref{eq:DerivEta} hold or $rank(H)=0$. 
\end{corollary}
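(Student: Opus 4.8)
The plan is to deduce the Corollary directly from Theorem \ref{thm1}, handling the two alternatives $rank(H)=1$ and $rank(H)=0$ separately and joining them by a non-degeneracy argument. The point is that Theorem \ref{thm1} already settles the case $rank(H)=1$ completely, so the only genuinely new content is the degenerate case, together with the observation that a Lagrangian surface can never have $rank(H)=2$.

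For the forward implication, I would assume $M$ is Lagrangian with respect to a parallel symplectic form $\Omega$ and apply part (1) of Theorem \ref{thm1}. This yields that the affine normal plane bundle is $\Omega$-Lagrangian and that the vector $[A,B]^t$, with $A=\Omega(X_1,\xi_2)=\Omega(X_2,\xi_1)$ and $B=\Omega(X_1,\xi_1)=-\epsilon\Omega(X_2,\xi_2)$, lies in $\ker H$. The key observation is that this vector cannot vanish: if $A=B=0$, then together with $\Omega(X_1,X_2)=0$ (since $M$ is Lagrangian) and $\Omega(\xi_1,\xi_2)=0$ (since $\sigma$ is Lagrangian) all six pairings of the frame $\{X_1,X_2,\xi_1,\xi_2\}$ would vanish, forcing $\Omega=0$ and contradicting non-degeneracy. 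Hence $\ker H$ contains a nonzero vector and $rank(H)\le 1$. If $rank(H)=1$, the \emph{moreover} clause of part (1) gives that $\eta$ satisfies \eqref{eq:DerivEta}; if $rank(H)=0$, we land in the second alternative. In either case the right-hand side of the Corollary holds.

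For the converse I would split according to the rank. If $rank(H)=1$ and \eqref{eq:DerivEta} holds, part (2) of Theorem \ref{thm1} produces directly a parallel symplectic form making $M$ Lagrangian. If $rank(H)=0$, then $H=0$ and in particular $F=0$; by the result of \cite{Nomizu93} quoted above, $M$ is then a complex curve when $g$ is definite, or a product of planar curves when $g$ is indefinite, and by \cite{Craizer14}, \cite{Martinez05} each such surface is Lagrangian with respect to a parallel symplectic form. This completes both directions.

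The main obstacle, and the reason the Corollary is not merely a restatement of Theorem \ref{thm1}, is the degenerate case $rank(H)=0$: here the Theorem is silent, and one must appeal to the external classification of surfaces with $F=0$ together with the known existence of parallel symplectic forms for complex curves and for products of planar curves. A secondary point requiring care is the non-degeneracy argument showing $[A,B]^t\neq 0$, since it is precisely this that upgrades the membership $[A,B]^t\in\ker H$ to the bound $rank(H)\le 1$ and thereby excludes $rank(H)=2$ whenever $M$ is Lagrangian. The invariance under change of $g$-orthonormal frame of $rank(H)$ and of equations \eqref{eq:DerivEta} is used throughout, but it has already been established in the preceding discussion.
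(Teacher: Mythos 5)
Your proposal is correct and follows essentially the same route as the paper: Theorem \ref{thm1} handles the $rank(H)=1$ case, and the $rank(H)=0$ case is reduced to the classification of surfaces with $F=0$ as complex curves or products of planar curves, which are known to be Lagrangian. The only addition is that you make explicit the non-vanishing of $[A,B]^t$ (via non-degeneracy of $\Omega$) needed to exclude $rank(H)=2$, a step the paper leaves implicit in its conclusion that $rank(H)<2$.
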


The paper is organized as follows: In section 2 we describe the equiaffine invariants of a surface in $\R^4$, showing that $rank(H)$ is independent of the choice of the local frame. In section 3, we give a characterization of the affine normal bundle in terms of the cubic forms and show that equations \eqref{eq:DerivEta} are independent of the choice of the local frame. 
In section 4 we prove the main theorem.

\section{Shape Operators and Cubic Forms}

\subsection{The affine metric and local frames}

We begin by recalling the definition of the affine metric $g$ of a surface $M\subset\R^4$ (\cite{Burstin27},\cite{Nomizu93}). For a local frame $u=\{X_1,X_2\}$ of the tangent plane, let
$$
G_u(Y,Z)=\frac{1}{2}\left(  [X_1,X_2,D_YX_1,D_ZX_2]+    [X_1,X_2,D_ZX_1,D_YX_2] \right).
$$
Denoting 
$$
\Delta(u)=G_u(X_1,X_1)G_u(X_2,X_2)-G_u(X_1,X_2)^2,
$$
one can verify that the condition $\Delta(u)\neq 0$ is independent of the choice of the basis $u$. When this condition holds, we say that the surface in {\it non-degenerate}.
Along this paper, we shall always assume that the surface $M$ is non-degenerate. For a non-degenerate surface, define
$$
g(Y,Z)=\frac{1}{\Delta(u)^{1/3}}G_u(Y,Z).
$$
Then $g$ is independent of $u$ and is called the {\it affine metric} of the surface. 

Consider a $g$-orthonormal local frame $\{X_1,X_2\}$ of $M$. Any other $g$-orthonormal local frame $\{Y_1,Y_2\}$ is related to $\{X_1,X_2\}$ by 
\begin{equation}\label{eq:ChangeFrame1}
\begin{array}{c}
Y_1=\cos(\theta)X_1+\sin(\theta)X_2\\
Y_2=-\sin(\theta)X_1+\cos(\theta)X_2,
\end{array}
\end{equation}
for $\epsilon=1$ and 
\begin{equation}\label{eq:ChangeFrame2}
\begin{array}{c}
Y_1=\cosh(\theta)X_1+\sinh(\theta)X_2\\
Y_2=\sinh(\theta)X_1+\cosh(\theta)X_2,
\end{array}
\end{equation}
for $\epsilon=-1$, for some $\theta$.
It is verified in \cite{Nomizu93}, lemmas 4.1 and 4.2, that the corresponding local frame $\{\overline{\xi}_1,\overline{\xi}_2\}$ for $\sigma$ satisfying \eqref{eq:Normalxi} is given by
\begin{equation}
\begin{array}{c}\label{eq:Changexi1}
\overline{\xi}_1=\cos(2\theta)\xi_1+\sin(2\theta)\xi_2\\
\overline{\xi}_2=-\sin(2\theta)\xi_1+\cos(2\theta)\xi_2,
\end{array}
\end{equation}
for $\epsilon=1$ and 
\begin{equation}\label{eq:Changexi2}
\begin{array}{c}
\overline{\xi}_1=\cosh(2\theta)\xi_1+\sinh(2\theta)\xi_2\\
\overline{\xi}_2=\sinh(2\theta)\xi_1+\cosh(2\theta)\xi_2,
\end{array}
\end{equation}
for $\epsilon=-1$.

\subsection{Shape operators}

The shape operators $S_1$ and $S_2$ are defined by equation \eqref{eq:shape} and its components $\lambda_{ij}^k$ are defined by \eqref{eq:definelambda}.
In this section we show how the matrix $L$ defined by \eqref{eq:defineL} changes by a change of the $g$-orthonormal local frame $\{X_1,X_2\}$. 


In order to have a more compact notation, consider the matrices $R_{\epsilon}$, $\epsilon=\pm 1$,  given by
$$
R_{1}(\theta)=
\left[
\begin{array}{cc}
\cos(\theta) & \sin(\theta)\\
-\sin(\theta) & \cos(\theta)
\end{array}
\right];\ \ 
R_{-1}(\theta)=
\left[
\begin{array}{cc}
\cosh(\theta) & \sinh(\theta)\\
\sinh(\theta) & \cosh(\theta)
\end{array}
\right].
$$

\begin{lemma}\label{lemma:ChangeL}
Denote by $\overline{L}$ the matrix $L$ associated with the local frame $\{Y_1,Y_2\}$ defined by \eqref{eq:ChangeFrame1} and \eqref{eq:ChangeFrame2}. Then
\begin{equation}\label{eq:ChangeL}
\overline{L}=R_{\epsilon}(\theta)LR_{\epsilon}(3\epsilon\theta).
\end{equation}
\end{lemma}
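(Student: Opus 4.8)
The plan is to track how each ingredient in the definition of $L$ transforms under the frame change, and then assemble the pieces into the claimed matrix identity $\overline{L}=R_\epsilon(\theta)LR_\epsilon(3\epsilon\theta)$. First I would compute how the shape operators $S_1,S_2$ change. Since the tangent frame changes by $R_\epsilon(\theta)$ via \eqref{eq:ChangeFrame1}--\eqref{eq:ChangeFrame2}, and the normal basis $\{\xi_1,\xi_2\}$ changes by the \emph{double-angle} matrix $R_\epsilon(2\theta)$ via \eqref{eq:Changexi1}--\eqref{eq:Changexi2}, the coefficients $\lambda_{ij}^k$ defined through $S_i X_j=\lambda_{ij}^1 X_1+\lambda_{ij}^2 X_2$ will transform by a combination of these two rotations. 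Concretely, the index $i$ (labeling which $\xi_i$ we differentiate) transforms under $R_\epsilon(2\theta)$, while the lower index $j$ and the output index $k$ each transform under $R_\epsilon(\theta)$. I would therefore first derive the transformation law for the full array $\lambda_{ij}^k$ from \eqref{eq:shape} applied to the barred frame.

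The key computational step is to verify that the particular linear combinations in \eqref{eq:defineL} that define $L_{11},L_{12},L_{21},L_{22}$ are engineered precisely so that the three separate rotations collapse into the single factorization $R_\epsilon(\theta)\,(\cdot)\,R_\epsilon(3\epsilon\theta)$. The left factor $R_\epsilon(\theta)$ should come from the output index $k$ (the tangential direction of $S_iX_j$), while the right factor $R_\epsilon(3\epsilon\theta)$ must be produced by the \emph{combined} action on the pair of indices $(i,j)$: the $\xi$-index contributes an angle $2\theta$ and the tangent-slot index contributes $\theta$, and these add (with the appropriate $\epsilon$-sign in the hyperbolic case) to give the triple angle $3\epsilon\theta$. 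I would carry this out by writing $\overline{L}_{ab}$ in terms of the barred $\overline{\lambda}$'s, substituting the transformation laws, and using the angle-addition identities for $R_\epsilon$ (namely $R_\epsilon(\alpha)R_\epsilon(\beta)=R_\epsilon(\alpha+\beta)$ in the trigonometric case, and the analogous hyperbolic identity) to recombine terms.

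I would organize the calculation so the two cases $\epsilon=1$ and $\epsilon=-1$ run in parallel as far as possible, using the compact $R_\epsilon$ notation, and only separating them where the sign conventions in the definitions of $L_{ij}$ (the factors of $-\epsilon$ in \eqref{eq:defineL}) force a divergence. A clean way to present this is to treat $L$ as a bilinear object and show that the defining formulas \eqref{eq:defineL} are exactly the coordinates of a tensor that transforms as a $(\theta)$-rotation on one side and a $(3\epsilon\theta)$-rotation on the other; then the lemma follows from the equivariance of the construction.

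The main obstacle I expect is bookkeeping rather than conceptual: correctly assembling the mixed transformation (with $\xi$ rotating at double speed and the tangent indices at single speed) and checking that the asymmetric $\epsilon$-signs in \eqref{eq:defineL} conspire with the hyperbolic sign conventions to yield exactly $3\epsilon\theta$ on the right and $\theta$ on the left, rather than some other angle or a mismatched pair of factors. The cleanest safeguard is to verify the identity infinitesimally first (differentiate at $\theta=0$), confirming that the three angular contributions sum correctly, and only then integrate up to the finite group elements $R_\epsilon$.
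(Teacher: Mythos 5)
Your plan is essentially the paper's proof: the paper likewise expands $\overline{S}_i(Y_j)$ using the single-angle change of the tangent frame together with the double-angle change of the $\xi$-frame (the $d\theta$ terms landing in $\sigma$ and hence in the $\tau$'s, not in the shape operators), and then verifies by direct computation that the combinations \eqref{eq:defineL} reassemble into $R_{\epsilon}(\theta)LR_{\epsilon}(3\epsilon\theta)$. One small bookkeeping correction to your narrative: in $L$ the row index is the tangent-argument index $j$ (so the left factor $R_{\epsilon}(\theta)$ acts there), while the column index encodes the combined pair $(i,k)$ contributing $2\theta+\theta=3\theta$ to the right factor --- you have the roles of $j$ and $k$ swapped, though since both rotate by $\theta$ the predicted angles, and hence the final identity, are unaffected.
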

\begin{proof}
The proof are long but straightforward calculations. For example, in case $\epsilon=-1$, we can calculate the first row of $\overline{L}$ as follows: From equation \eqref{eq:Changexi2} we have that
\begin{equation*}
\begin{array}{c}
\overline{S}_1(Y_1)=\cosh(\theta)\cosh(2\theta)S_1(X_1)+\cosh(\theta)\sinh(2\theta)S_2(X_1)+\\
+\sinh(\theta)\cosh(2\theta)S_1(X_2)+\sinh(\theta)\sinh(2\theta)S_2(X_2)
\end{array}
\end{equation*}
and 
\begin{equation*}
\begin{array}{c}
\overline{S}_2(Y_1)=\cosh(\theta)\sinh(2\theta)S_1(X_1)+\cosh(\theta)\cosh(2\theta)S_2(X_1)+\\
+\sinh(\theta)\sinh(2\theta)S_1(X_2)+\sinh(\theta)\cosh(2\theta)S_2(X_2)
\end{array}
\end{equation*}
Now using again equations \eqref{eq:ChangeFrame2} and comparing the coefficients we obtain after some calculations
\begin{equation*}
\begin{array}{c}
\overline{L}_{11}=\cosh(\theta)\cosh(3\theta)L_{11}-\cosh(\theta)\sinh(3\theta)L_{12}+\\
+\sinh(\theta)\cosh(3\theta)L_{21}-\sinh(\theta)\sinh(3\theta)L_{22}
\end{array}
\end{equation*}
and 
\begin{equation*}
\begin{array}{c}
\overline{L}_{12}=-\cosh(\theta)\sinh(3\theta)L_{11}+\cosh(\theta)\cosh(3\theta)L_{12}-\\
-\sinh(\theta)\sinh(3\theta)L_{21}+\sinh(\theta)\cosh(3\theta)L_{22},
\end{array}
\end{equation*}
which agree with equation \eqref{eq:ChangeL}.
\end{proof}

\subsection{Cubic forms}

Consider the cubic forms $C^1$ and $C^2$ defined by equation \eqref{eq:defineCubic} and the matrix $F$ whose entries are defined by equations \eqref{eq:defineF}.

\begin{lemma}\label{lemma:ChangeF}
Denote by $\overline{F}$ the matrix $F$ associated with the local frame $\{Y_1,Y_2\}$ defined by \eqref{eq:ChangeFrame1} and \eqref{eq:ChangeFrame2}. Then
\begin{equation}\label{eq:MudF}
\overline{F}=R_{\epsilon}(2\epsilon\theta)FR_{\epsilon}(3\epsilon\theta).
\end{equation}
\end{lemma}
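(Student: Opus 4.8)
The plan is to reduce everything to a single structural observation: the $\sigma$-valued symmetric trilinear form $C:=C^1\xi_1+C^2\xi_2$ is intrinsic, so that the change of $g$-orthonormal frame affects $F$ only through two independent mechanisms — the mixing of the two normal components of $C$ (governed by \eqref{eq:Changexi1}--\eqref{eq:Changexi2}) and the degree-three behaviour of its three tangential arguments (governed by \eqref{eq:ChangeFrame1}--\eqref{eq:ChangeFrame2}). These two mechanisms will produce, respectively, the left factor $R_\epsilon(2\epsilon\theta)$ and the right factor $R_\epsilon(3\epsilon\theta)$.

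First I would verify that $C$ is independent of both the tangent frame and the basis of $\sigma$. Viewing $h=h^1\xi_1+h^2\xi_2$ as a section of $\mathrm{Sym}^2 T^*M\otimes\sigma$ and writing $\nabla^\perp$ for the normal connection determined by $\nabla^\perp_X\xi_i=\tau_i^1(X)\xi_1+\tau_i^2(X)\xi_2$, a direct expansion of the covariant derivative of $h$ for the connection induced by $\nabla$ on $T^*M$ and by $\tau$ on $\sigma$ gives
\[
(\nabla^\perp_X h)(Y,Z)=C^1(X,Y,Z)\,\xi_1+C^2(X,Y,Z)\,\xi_2,
\]
with $C^1,C^2$ exactly as in \eqref{eq:defineCubic}. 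Since $\nabla$, $h$ and $\sigma$ are determined by $M$ and the transversal bundle alone, the form $C=\nabla^\perp h$ is invariant; only its components $C^i$ and its evaluations on the tangent frame change under a change of frame.

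Granting this invariance, I would treat the two factors separately. For the left factor, I write $C=\overline C^1\overline\xi_1+\overline C^2\overline\xi_2=C^1\xi_1+C^2\xi_2$ and invert \eqref{eq:Changexi1}--\eqref{eq:Changexi2}; a short computation (identical in form to the inversion used in Lemma \ref{lemma:ChangeL}) yields, as trilinear forms, $[\overline C^1,\overline C^2]^t=R_\epsilon(2\epsilon\theta)[C^1,C^2]^t$, and this accounts for the row index of $F$, i.e.\ the left multiplication by $R_\epsilon(2\epsilon\theta)$. For the right factor, I would expand each $\overline C^i(Y_a,Y_b,Y_c)$ by multilinearity using \eqref{eq:ChangeFrame1}--\eqref{eq:ChangeFrame2} and substitute into the definitions \eqref{eq:defineF} of $\overline F_{i1},\overline F_{i2}$. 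The two combinations forming a row of $F$, namely $3C^i(X_1,X_1,X_2)-\epsilon C^i(X_2,X_2,X_2)$ and $\epsilon C^i(X_1,X_1,X_1)-3C^i(X_1,X_2,X_2)$, are precisely the degree-three harmonic components of the symmetric trilinear form $C^i$, and these rotate (resp.\ boost) at triple frequency; collecting terms with the identities $\cos3\theta=4\cos^3\theta-3\cos\theta$, $\sin3\theta=3\cos^2\theta\sin\theta-\sin^3\theta$ (and their hyperbolic analogues when $\epsilon=-1$) produces exactly right multiplication by $R_\epsilon(3\epsilon\theta)$. Combining the two steps gives $\overline F=R_\epsilon(2\epsilon\theta)F R_\epsilon(3\epsilon\theta)$.

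The main obstacle is the bookkeeping in the second step: the full multilinear expansion of each $\overline C^i(Y_a,Y_b,Y_c)$ has eight terms per entry, and one must keep careful track of the $\epsilon$-dependent signs so that the definite and indefinite cases collapse simultaneously to the triple-angle form. This is the long-but-straightforward computation the authors flagged for Lemma \ref{lemma:ChangeL}; as there, one may either carry it out in full or verify a single representative row, the remaining entries following by the analogous calculation.
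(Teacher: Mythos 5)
Your proof is correct and follows essentially the same route as the paper's: both factor the change of frame into the $2\theta$-mixing of the two normal components and the triple-angle ($3\theta$) behaviour of the cubic form in its tangential arguments, the latter being exactly the paper's complex-notation identity $C^i(X_1+iX_2,X_1+iX_2,X_1+iX_2)=F_{i2}+iF_{i1}$. The only real difference is that where the paper simply cites Lemma 6.2 of \cite{Nomizu93} for the combined transformation law of $\overline{C}^1,\overline{C}^2$, you re-derive it from the frame-invariance of $C=\nabla^\perp h$ as a $\sigma$-valued trilinear form, which makes the argument self-contained.
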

\begin{proof}
We give a proof in case $\epsilon=1$, the case $\epsilon=-1$ being similar. Using complex notation, observe that
$$
C^1(X_1+iX_2, X_1+iX_2,X_1+iX_2)=F_{12}+iF_{11};
$$
$$
C^2(X_1+iX_2, X_1+iX_2,X_1+iX_2)=F_{22}+iF_{21};
$$
By lemma 6.2 of \cite{Nomizu93},
$$
e^{3i\theta}\overline{C}^1(Y_1+iY_2,Y_1+iY_2,Y_1+iY_2)=
$$
$$
 \cos(2\theta)C^1(X_1+iX_2, X_1+iX_2,X_1+iX_2)+\sin(2\theta)C^2(X_1+iX_2, X_1+iX_2,X_1+iX_2).
$$
$$
e^{3i\theta}\overline{C}^2(Y_1+iY_2,Y_1+iY_2,Y_1+iY_2)=
$$
$$
-\sin(2\theta)C^1(X_1+iX_2, X_1+iX_2,X_1+iX_2)+\cos(2\theta)C^2(X_1+iX_2, X_1+iX_2,X_1+iX_2).
$$
Thus
$$
\begin{array}{c}
\overline{F}_{12}+i\overline{F}_{11}=e^{-3i\theta} \left[ \cos(2\theta)(F_{12}+iF_{11})+\sin(2\theta)(F_{22}+iF_{21}) \right]\\
\overline{F}_{22}+i\overline{F}_{21}=e^{-3i\theta} \left[ -\sin(2\theta)(F_{12}+iF_{11})+\cos(2\theta)(F_{22}+iF_{21}) \right],
\end{array}
$$
which can be written as in equation \eqref{eq:MudF}. 
\end{proof}

Now we can prove the following lemma:

\begin{lemma}\label{lemma:rankH}
The rank of $H$ is independent of the choice of the local frame $\{X_1,X_2\}$. Moreover, if $rank(H)=1$, then $\overline{\eta}=\eta+3\theta$.
\end{lemma}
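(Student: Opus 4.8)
The plan is to read off both change-of-frame laws at once and isolate the single feature that controls the rank. By Lemma~\ref{lemma:ChangeL} and Lemma~\ref{lemma:ChangeF},
\begin{equation*}
\overline{L}=R_{\epsilon}(\theta)\,L\,R_{\epsilon}(3\epsilon\theta),\qquad
\overline{F}=R_{\epsilon}(2\epsilon\theta)\,F\,R_{\epsilon}(3\epsilon\theta).
\end{equation*}
The two left factors $R_{\epsilon}(\theta)$ and $R_{\epsilon}(2\epsilon\theta)$ are different, but each is invertible and acts on its block ($L$ or $F$) separately, whereas the right factor $R_{\epsilon}(3\epsilon\theta)$ is the \emph{same} for both. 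Thus the passage from $H$ to $\overline{H}$ amounts to multiplying $H$ on one side by an invertible matrix $P_{\theta}$ (the block map applying $R_{\epsilon}(\theta)$ to the $L$-part and $R_{\epsilon}(2\epsilon\theta)$ to the $F$-part) and on the other side by the invertible matrix $R_{\epsilon}(3\epsilon\theta)$, i.e. $\overline{H}=P_{\theta}\,H\,R_{\epsilon}(3\epsilon\theta)$. Since rank is unchanged under multiplication by invertible matrices, $\mathrm{rank}(\overline{H})=\mathrm{rank}(H)$, which proves the first assertion.

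For the second assertion, assume $\mathrm{rank}(H)=1$, so that $\ker(H)$ is a line spanned by $[A,B]^{t}$. From $\overline{H}=P_{\theta}\,H\,R_{\epsilon}(3\epsilon\theta)$ and the invertibility of $P_{\theta}$ we get $\ker(\overline{H})=R_{\epsilon}(3\epsilon\theta)^{-1}\ker(H)$, so a generator of $\ker(\overline{H})$ is
\begin{equation*}
[\overline{A},\overline{B}]^{t}=R_{\epsilon}(3\epsilon\theta)^{-1}[A,B]^{t}=R_{\epsilon}(-3\epsilon\theta)\,[A,B]^{t}.
\end{equation*}
For $\epsilon=1$ this is the rotation of $[A,B]^{t}$ through the angle $3\theta$, and for $\epsilon=-1$ it is the hyperbolic rotation $R_{-1}(3\theta)$ of $[A,B]^{t}$. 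In either case, substituting into $\eta=\tan^{-1}(B/A)$ (resp. $\tanh^{-1}(B/A)$) and using the addition formula for $\tan$ (resp. $\tanh$) gives $\overline{B}/\overline{A}=\tan(\eta+3\theta)$ (resp. $\tanh(\eta+3\theta)$), whence $\overline{\eta}=\eta+3\theta$.

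The step that needs care is the mismatch of the two left factors: because $R_{\epsilon}(\theta)\neq R_{\epsilon}(2\epsilon\theta)$, one cannot pull a single matrix out on the left, and a naive attempt to compare $H$ and $\overline{H}$ would suggest that the rank is not preserved. The resolution is that this mismatch is immaterial for the rank (the factors are invertible and act on the separate blocks) and, more importantly, that the shift in $\eta$ is governed entirely by the \emph{shared} right factor $R_{\epsilon}(3\epsilon\theta)$; this is precisely what produces the clean value $3\theta$ rather than $\theta$ or $2\theta$. The only remaining work is the routine verification of the $\tan$/$\tanh$ addition identity and the check that the two cases $\epsilon=\pm1$ yield the same conclusion.
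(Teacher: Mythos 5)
Your proposal is correct and follows essentially the same route as the paper: both arguments combine Lemmas~\ref{lemma:ChangeL} and \ref{lemma:ChangeF} to see that the kernel transforms by $R_{\epsilon}(-3\epsilon\theta)$ (hence the rank is preserved), and then apply the $\tan$/$\tanh$ addition formula to get $\overline{\eta}=\eta+3\theta$. Your explicit factorization $\overline{H}=P_{\theta}HR_{\epsilon}(3\epsilon\theta)$ with the block-diagonal invertible $P_{\theta}$ is a slightly cleaner way to state why the differing left factors are harmless, but it is the same argument.
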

\begin{proof}
By lemmas \ref{lemma:ChangeL} and \ref{lemma:ChangeF}, the column-vector $[\overline{A},\overline{B}]^t$ belongs to the kernel of $\overline{H}$ if and only if $[A,B]^t$ belongs to the kernel of $H$, 
where $[\overline{A},\overline{B}]^t=R_{\epsilon}(-3\epsilon\theta)[A,B]^t$, which implies the invariance of $rank(H)$. In case $\epsilon=1$, we have 
that 
\begin{equation*}
\tan(\eta+3\theta)=\frac{\sin(\eta)\cos(3\theta)+\cos(\eta)\sin(3\theta)}{\cos(\eta)\cos(3\theta)-\sin(\eta)\sin(3\theta)}=\frac{B\cos(3\theta)+A\sin(3\theta)}{A\cos(3\theta)-B\sin(3\theta)}=\frac{\overline{B}}{\overline{A}},
\end{equation*}
thus proving that $\overline{\eta}=\eta+3\theta$. Similarly, in case $\epsilon=-1$, 
\begin{equation*}
\tanh(\eta+3\theta)=\frac{B\cosh(3\theta)+A\sinh(3\theta)}{A\cosh(3\theta)+B\sinh(3\theta)}=\frac{\overline{B}}{\overline{A}},
\end{equation*}
again proving that $\overline{\eta}=\eta+3\theta$.
\end{proof}

\subsection{Some formulas}

For further references, we write some formulas that hold for any transversal bundle $\sigma$. 
The symmetry conditions on the cubic forms imply that
\begin{equation}\label{eq:SymmetryC1}
\begin{array}{c}
2\Gamma_{22}^2+\tau_1^1(X_2)=-\Gamma_{12}^1+\epsilon\Gamma_{11}^2+\tau_2^1(X_1)\\
-2\epsilon\Gamma_{11}^1-\epsilon\tau_1^1(X_1)=\epsilon\Gamma_{21}^2-\Gamma_{22}^1+\tau_2^1(X_2)
\end{array}
\end{equation}
and
\begin{equation}\label{eq:SymmetryC2}
\begin{array}{c}
-2\Gamma_{12}^1-\epsilon\tau_1^2(X_1)=\tau_2^2(X_2)\\
-2\Gamma_{21}^2+\tau_1^2(X_2)=\tau_2^2(X_1)
\end{array}
\end{equation}

On the other hand, the condition $[X_1,X_2,\xi_1,\xi_2]=1$ implies that
\begin{equation}\label{eq:Det1}
\begin{array}{c}
\Gamma_{11}^1+\Gamma_{12}^2+\tau_1^1(X_1)+\tau_2^2(X_1)=0\\
\Gamma_{21}^1+\Gamma_{22}^2+\tau_1^1(X_2)+\tau_2^2(X_2)=0,
\end{array}
\end{equation}
(see \cite{Nomizu93}). 

\section{ The affine normal plane bundle}

\subsection{Definition and some relations}

Consider a $g$-ortonormal local frame $\{X_1,X_2\}$ of the tangent bundle. We say that a transversal bundle $\sigma$ is equiaffine if
\begin{equation*}
\begin{array}{c}
\epsilon\nabla(g)(X_1,X_1,X_1)+\nabla(g)(X_1,X_2,X_2)=0\\
\epsilon\nabla(g)(X_2,X_1,X_1)+\nabla(g)(X_2,X_2,X_2)=0
\end{array}
\end{equation*}
The affine normal plane bundle is an equiaffine bundle $\sigma$ satisfying 
\begin{equation*}
\begin{array}{c}
\nabla(g)(X_2,X_1,X_1)+\nabla(g)(X_1,X_2,X_1)=0\\
\nabla(g)(X_1,X_2,X_2)+\nabla(g)(X_2,X_1,X_2)=0
\end{array}
\end{equation*}
Lemma 7.3 of \cite{Nomizu93} says that the affine normal plane is characterized by the conditions 
\begin{equation}\label{eq:AffineBundle1}
\Gamma_{12}^2=-\Gamma_{11}^1;\ \Gamma_{21}^1=-\Gamma_{22}^2;
\end{equation}
and 
\begin{equation}\label{eq:AffineBundle2}
2\Gamma_{11}^1=\Gamma_{21}^2+\epsilon\Gamma_{22}^1;\ 2\Gamma_{22}^2=\Gamma_{12}^1+\epsilon\Gamma_{11}^2.
\end{equation}
As a consequence of equations \eqref{eq:Det1} and \eqref{eq:AffineBundle1} we obtain
\begin{equation}\label{eq:SumTau}
\tau_1^1+\tau_2^2=0.
\end{equation}

It is proved in \cite{Nomizu93} that a non-degenerate immersion admits a unique affine normal bundle. 


\subsection{Characterization of the affine normal bundle in terms of the cubic forms}

Define
\begin{equation*}\label{eq:E1E2}
\begin{array}{c}
E_1=\epsilon C^1(X_1,X_1,X_1)+C^1(X_1,X_2,X_2)-\epsilon C^2(X_1,X_1,X_2)-C^2(X_2,X_2,X_2)\\
E_2=\epsilon C^1(X_1,X_1,X_2)+C^1(X_2,X_2,X_2)+\epsilon C^2(X_1,X_2,X_2)+C^2(X_1,X_1,X_1)\\
E_3=3C^1(X_1,X_1,X_1)-\epsilon C^1(X_1,X_2,X_2)+3C^2(X_1,X_1,X_2)-\epsilon C^2(X_2,X_2,X_2)\\
E_4=C^1(X_1,X_1,X_2)-3\epsilon C^1(X_2,X_2,X_2)+3C^2(X_1,X_2,X_2)-\epsilon C^2(X_1,X_1,X_1).
\end{array}
\end{equation*}

\begin{Proposition}\label{prop:NormalCubic}
$\sigma$ is the affine normal plane bundle if and only if $E_1=E_2=E_3=E_4=0$. 
\end{Proposition}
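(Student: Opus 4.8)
The plan is to reduce the statement to a linear-algebra identity relating the four scalars $E_1,\dots,E_4$ to the four scalar conditions \eqref{eq:AffineBundle1}, \eqref{eq:AffineBundle2} which, by Lemma 7.3 of \cite{Nomizu93}, characterize the affine normal bundle. The whole proof is a direct computation organized so that the bookkeeping does not get out of hand.

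First I would compute every cubic-form component $C^i(X_a,X_j,X_k)$ explicitly in terms of the Christoffel symbols $\Gamma_{ab}^c$ and the connection forms $\tau_i^j$. Since the normalized values \eqref{eq:Normalxi} are constant in the frame, one has $X_a\big(h^i(X_j,X_k)\big)=0$, so the definition \eqref{eq:defineCubic} reduces to $C^i(X_a,X_j,X_k)=-h^i(\nabla_{X_a}X_j,X_k)-h^i(X_j,\nabla_{X_a}X_k)+\tau_1^i(X_a)h^1(X_j,X_k)+\tau_2^i(X_a)h^2(X_j,X_k)$, which is evaluated using \eqref{eq:defineChristoffel} and \eqref{eq:Normalxi}. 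This produces eight explicit linear expressions, for instance $C^1(X_1,X_1,X_1)=-2\Gamma_{11}^1+\tau_1^1(X_1)$ and $C^2(X_1,X_1,X_2)=-\Gamma_{11}^1-\Gamma_{12}^2+\tau_2^2(X_1)$.

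Next I would substitute these into the definitions of $E_1,E_2,E_3,E_4$ and simplify using only the relations valid for \emph{every} transversal bundle, namely the symmetry relations \eqref{eq:SymmetryC1}, \eqref{eq:SymmetryC2} and the volume relations \eqref{eq:Det1}. The effect of these substitutions is to cancel all the $\tau$-dependence. Introducing the four \emph{affine-normal quantities} $P_1=\Gamma_{11}^1+\Gamma_{12}^2$ and $P_2=\Gamma_{22}^2+\Gamma_{21}^1$ (whose vanishing is \eqref{eq:AffineBundle1}), together with $P_3=2\Gamma_{11}^1-\Gamma_{21}^2-\epsilon\Gamma_{22}^1$ and $P_4=2\Gamma_{22}^2-\Gamma_{12}^1-\epsilon\Gamma_{11}^2$ (whose vanishing is \eqref{eq:AffineBundle2}), the computation collapses to
\begin{equation*}
E_1=3\epsilon P_1-2\epsilon P_3,\quad E_2=2\epsilon P_4,\quad E_3=-9P_1-2P_3,\quad E_4=-4P_2-2P_4.
\end{equation*}
Finally I would observe that the linear map $(P_1,P_2,P_3,P_4)\mapsto(E_1,E_2,E_3,E_4)$ is invertible: the pair $(E_1,E_3)$ determines $(P_1,P_3)$ because the corresponding $2\times2$ determinant equals $-24\epsilon\neq0$, then $E_2$ determines $P_4$, and then $E_4$ determines $P_2$. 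Hence $E_1=E_2=E_3=E_4=0$ if and only if $P_1=P_2=P_3=P_4=0$, that is, if and only if \eqref{eq:AffineBundle1} and \eqref{eq:AffineBundle2} hold, i.e. if and only if $\sigma$ is the affine normal plane bundle.

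The main obstacle is the elimination step: there are many Christoffel and $\tau$ terms, and one must apply the universal relations in exactly the right combination so that the $\tau$-dependence cancels completely and only the $P_i$ survive. The genuinely load-bearing point, which distinguishes the ``if'' direction from the easier ``only if'' direction, is the invertibility in the last step; checking that the $P_i\to E_k$ matrix is nonsingular (rather than merely verifying that the affine-normal conditions force $E_k=0$) is the crux.
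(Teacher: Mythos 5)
Your proposal follows essentially the same route as the paper: write out the cubic-form components in terms of the $\Gamma_{ab}^c$ and $\tau_i^j$, use the universal relations \eqref{eq:SymmetryC1}, \eqref{eq:SymmetryC2}, \eqref{eq:Det1} to eliminate the $\tau$'s, and reduce the statement to the invertibility of a linear map onto the four quantities whose vanishing is \eqref{eq:AffineBundle1}--\eqref{eq:AffineBundle2}; you also correctly single out that invertibility (the ``if'' direction) as the crux, which is exactly where the paper spends its effort. The only caveat is that your displayed coefficients are not right: for instance $E_2$ requires no universal relations at all and a direct substitution gives $E_2=2\epsilon\left(\Gamma_{22}^2-\Gamma_{21}^1-\Gamma_{12}^1-\epsilon\Gamma_{11}^2\right)=2\epsilon(P_4-P_2)$ rather than $2\epsilon P_4$ (this matches the paper's displayed equation $-\Gamma_{21}^1+\Gamma_{22}^2-\epsilon\Gamma_{11}^2-\Gamma_{12}^1=0$), and similarly $E_1$ is proportional to $P_1-P_3$ rather than $3\epsilon P_1-2\epsilon P_3$. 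Since the corrected $4\times 4$ matrix is still nonsingular (the $(E_1,E_3)$ block still determines $(P_1,P_3)$, and $(E_2,E_4)$ still determines $(P_2,P_4)$), the argument survives, but the formulas as written would not pass verification.
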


\begin{proof}
For a general transversal bundle $\sigma$, the components of the cubic form are given by
\begin{equation}\label{eq:CubicFormulas}
\begin{array}{c}
C^1(X_1,X_1,X_1)=-2\Gamma_{11}^1+\tau_1^1(X_1),\\
C^1(X_1,X_1,X_2)=-2\Gamma_{21}^1+\tau_1^1(X_2),\\
C^1(X_1,X_2,X_2)=2\epsilon\Gamma_{12}^2-\epsilon\tau_1^1(X_1),\\
C^1(X_2,X_2,X_2)=2\epsilon\Gamma_{22}^2-\epsilon\tau_1^1(X_2),
\end{array}
\begin{array}{c}
C^2(X_1,X_1,X_1)=-2\Gamma_{11}^2+\tau_1^2(X_1),\\
C^2(X_2,X_1,X_1)=-2\Gamma_{21}^2+\tau_1^2(X_2),\\
C^2(X_1,X_2,X_2)=-2\Gamma_{12}^1-\epsilon\tau_1^2(X_1),\\
C^2(X_2,X_2,X_2)=-2\Gamma_{22}^1-\epsilon\tau_1^2(X_2).
\end{array}
\end{equation}
Assuming that $\sigma$ is the affine normal plane bundle, equations \eqref{eq:AffineBundle1} and \eqref{eq:AffineBundle2} easily imply that $E_1=E_2=0$. Moreover, it is not difficult
to verify that equations \eqref{eq:AffineBundle1} and \eqref{eq:AffineBundle2} together with equations \eqref{eq:SymmetryC1}, \eqref{eq:SymmetryC2} and \eqref{eq:Det1} imply that
$E_3=E_4=0$. 

Assume now that $E_1=E_2=E_3=E_4=0$. Then we can write
\begin{equation*}
\begin{array}{c}
-\Gamma_{11}^1+\Gamma_{12}^2+\Gamma_{21}^2+\epsilon\Gamma_{22}^1=0\\
-3\Gamma_{11}^1-\Gamma_{12}^2-3\Gamma_{21}^2+\epsilon\Gamma_{22}^1=-2(\tau_1^1(X_1)+\tau_1^2(X_2)).
\end{array}
\end{equation*}
and 
\begin{equation*}
\begin{array}{c}
-\Gamma_{21}^1+\Gamma_{22}^2-\epsilon\Gamma_{11}^2-\Gamma_{12}^1=0\\
\epsilon\Gamma_{11}^2-3\Gamma_{12}^1-\Gamma_{21}^1-3\Gamma_{22}^2=-2(\tau_1^1(X_2)-\epsilon\tau_1^2(X_1)).
\end{array}
\end{equation*}
By using equations \eqref{eq:SymmetryC2} we obtain
\begin{equation*}
\begin{array}{c}
-3\Gamma_{11}^1-\Gamma_{12}^2+\Gamma_{21}^2+\epsilon\Gamma_{22}^1=-2(\tau_1^1+\tau_2^2)(X_1)\\
\Gamma_{11}^1+\Gamma_{12}^2=(\tau_1^1+\tau_2^2)(X_1),
\end{array}
\end{equation*}
and
\begin{equation*}
\begin{array}{c}
-3\Gamma_{22}^2-\Gamma_{21}^1+\Gamma_{12}^1+\epsilon\Gamma_{11}^2=-2(\tau_1^1+\tau_2^2)(X_2)\\
\Gamma_{22}^2+\Gamma_{21}^1=(\tau_1^1+\tau_2^2)(X_2).
\end{array}
\end{equation*}
Now we use equations \eqref{eq:Det1} to conclude that equations \eqref{eq:AffineBundle1} and \eqref{eq:AffineBundle2} hold, which proves that $\sigma$ is the affine normal plane bundle.
\end{proof}

\begin{remark}
There is another choice of the transversal bundle $\sigma$ introduced by Klingenberg (\cite{Klingenberg51}) that is characterized by four conditions
involving the cubic forms $C^1$ and $C^2$ (see lemma 6.1. of \cite{Nomizu93}). Two of these conditions are $E_1=E_2=0$. 
\end{remark}

When we choose the affine normal bundle as the transversal bundle $\sigma$, the elements $F_{ij}$ of the matrix $F$ assume a remarkable simple form.

\begin{Proposition} For the affine normal plane bundle 
\begin{equation}\label{eq:FNormal}
F=\left[
\begin{array}{cc}
F_{11}& F_{12}\\
F_{21}& F_{22}
\end{array}
\right]=4\left[
\begin{array}{cc}
\Gamma_{22}^2+\tau_1^1(X_2)  & \epsilon\Gamma_{11}^1+\epsilon\tau_1^1(X_1) \\
\Gamma_{11}^1-\tau_1^1(X_1) & -\Gamma_{22}^2+\tau_1^1(X_2) 
\end{array}
\right]
\end{equation}
\end{Proposition}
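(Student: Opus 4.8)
The plan is to compute each of the four entries of $F$ by direct substitution. Starting from the definitions \eqref{eq:defineF}, I would replace every cubic-form component by its expression \eqref{eq:CubicFormulas} in terms of the Christoffel symbols $\Gamma_{ij}^k$ and the connection forms $\tau_i^j$. Since those formulas for $C^1$ and $C^2$ hold for an arbitrary transversal bundle, the only place where the affine normal bundle enters is through the defining relations \eqref{eq:AffineBundle1} and \eqref{eq:AffineBundle2}, together with the universally valid identities \eqref{eq:SymmetryC2} and \eqref{eq:SumTau}; throughout I will use $\epsilon^2=1$ freely.

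For the first row I would substitute into $F_{11}=3C^1(X_1,X_1,X_2)-\epsilon C^1(X_2,X_2,X_2)$ and $F_{12}=\epsilon C^1(X_1,X_1,X_1)-3C^1(X_1,X_2,X_2)$. After substitution $F_{11}$ becomes a combination of $\Gamma_{21}^1$, $\Gamma_{22}^2$ and $\tau_1^1(X_2)$; the single relation $\Gamma_{21}^1=-\Gamma_{22}^2$ from \eqref{eq:AffineBundle1} then collapses the two $\Gamma$-terms, giving $F_{11}=4(\Gamma_{22}^2+\tau_1^1(X_2))$. Likewise $F_{12}$ reduces to a combination of $\Gamma_{11}^1$, $\Gamma_{12}^2$ and $\tau_1^1(X_1)$, and $\Gamma_{12}^2=-\Gamma_{11}^1$ yields $F_{12}=4\epsilon(\Gamma_{11}^1+\tau_1^1(X_1))$. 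Thus the first row requires only \eqref{eq:AffineBundle1}.

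The second row is where the real work lies, and I expect it to be the main obstacle. Substituting \eqref{eq:CubicFormulas} into $F_{21}$ and $F_{22}$ produces expressions involving the ``wrong'' connection form $\tau_1^2$, namely $\tau_1^2(X_2)$ in $F_{21}$ and $\tau_1^2(X_1)$ in $F_{22}$, whereas the target matrix \eqref{eq:FNormal} is written purely in terms of $\tau_1^1$. The key step is therefore to eliminate $\tau_1^2$. For this I would invoke the symmetry identities \eqref{eq:SymmetryC2} together with $\tau_2^2=-\tau_1^1$ from \eqref{eq:SumTau}, which give $\tau_1^2(X_2)=2\Gamma_{21}^2-\tau_1^1(X_1)$ and $\epsilon\tau_1^2(X_1)=\tau_1^1(X_2)-2\Gamma_{12}^1$. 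Substituting these, each $\tau_1^2$ contribution converts into a $\tau_1^1$ contribution plus extra Christoffel terms, and the remaining purely Christoffel combinations turn out to be exactly $\Gamma_{21}^2+\epsilon\Gamma_{22}^1$ (with coefficient $2$) in $F_{21}$ and $\Gamma_{12}^1+\epsilon\Gamma_{11}^2$ (with coefficient $-2$) in $F_{22}$. Applying the second pair of affine-normal relations \eqref{eq:AffineBundle2}, i.e. $\Gamma_{21}^2+\epsilon\Gamma_{22}^1=2\Gamma_{11}^1$ and $\Gamma_{12}^1+\epsilon\Gamma_{11}^2=2\Gamma_{22}^2$, then collapses these into $F_{21}=4(\Gamma_{11}^1-\tau_1^1(X_1))$ and $F_{22}=4(-\Gamma_{22}^2+\tau_1^1(X_2))$, completing the identification with \eqref{eq:FNormal}. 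The one genuinely non-mechanical point is recognizing in advance that the $\tau_1^2$ terms must and can be traded for $\tau_1^1$ terms via \eqref{eq:SymmetryC2} and \eqref{eq:SumTau}; once that is seen, the rest is bookkeeping.
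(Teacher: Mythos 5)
Your proposal is correct and follows essentially the same route as the paper: substitute \eqref{eq:CubicFormulas} into \eqref{eq:defineF} and reduce using the affine normal bundle relations. The paper only writes out $F_{12}$ and dismisses the rest as ``similar,'' whereas you correctly note and carry out the extra step needed for the second row --- trading $\tau_1^2$ for $\tau_1^1$ via \eqref{eq:SymmetryC2} and \eqref{eq:SumTau} before applying \eqref{eq:AffineBundle2} --- and your intermediate coefficients check out.
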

\begin{proof}
We shall check these formulas for $F_{12}$, the other cases being similar. From equations \eqref{eq:CubicFormulas}, we have
$$
F_{12}=\epsilon C^1(X_1,X_1,X_1)-3C^1(X_1,X_2,X_2)=
$$
$$
=\epsilon \left[-2\Gamma_{11}^1+\tau_1^1(X_1) \right]-3\epsilon\left[ 2\Gamma_{12}^2 -\tau_1^1(X_1)    \right]=4\epsilon\left[ \Gamma_{11}^1+\tau_1^1(X_1)  \right],
$$
where in last equality we have used equations \eqref{eq:AffineBundle1}.
\end{proof}

\subsection{Invariance of equations \eqref{eq:DerivEta} under the choice of the local frame}

Consider $G_1$ and $G_2$ defined by equations \eqref{eq:defineG}. 

\begin{lemma}
When $\sigma$ is the affine normal plane bundle we can write
\begin{equation}
\begin{array}{c}
G_1=5\Gamma_{22}^2-3\epsilon\Gamma_{11}^2\\
G_2=5\Gamma_{11}^1-3\epsilon\Gamma_{22}^1.
\end{array}
\end{equation}
\end{lemma}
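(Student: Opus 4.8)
The plan is to start from the definitions \eqref{eq:defineG} of $G_1$ and $G_2$ and eliminate every connection-form term $\tau_i^j$ in favour of Christoffel symbols, after which the affine-normal-bundle identities \eqref{eq:AffineBundle2} collapse the expressions into the stated form. The key observation is that each of $G_1$ and $G_2$ contains exactly the combination of $\tau$'s handled by one of the symmetry relations in \eqref{eq:SymmetryC2}, once \eqref{eq:SumTau} is used to trade $\tau_2^2$ for $-\tau_1^1$.

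For $G_1$ I would first rewrite the term $\tau_1^1(X_2)-\epsilon\tau_1^2(X_1)$. Using \eqref{eq:SumTau} in the form $\tau_2^2(X_2)=-\tau_1^1(X_2)$, the first equation of \eqref{eq:SymmetryC2} becomes $-2\Gamma_{12}^1-\epsilon\tau_1^2(X_1)=-\tau_1^1(X_2)$, so that $\tau_1^1(X_2)-\epsilon\tau_1^2(X_1)=2\Gamma_{12}^1$. Substituting this into $G_1=\Gamma_{22}^2-\epsilon\Gamma_{11}^2+\tau_1^1(X_2)-\epsilon\tau_1^2(X_1)$ gives $G_1=\Gamma_{22}^2-\epsilon\Gamma_{11}^2+2\Gamma_{12}^1$. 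The second identity of \eqref{eq:AffineBundle2}, namely $\Gamma_{12}^1=2\Gamma_{22}^2-\epsilon\Gamma_{11}^2$, then yields $G_1=5\Gamma_{22}^2-3\epsilon\Gamma_{11}^2$ as claimed.

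The computation for $G_2$ is entirely parallel. The second equation of \eqref{eq:SymmetryC2} gives $\tau_2^2(X_1)=-2\Gamma_{21}^2+\tau_1^2(X_2)$, whence $-\tau_2^2(X_1)+\tau_1^2(X_2)=2\Gamma_{21}^2$ and therefore $G_2=\Gamma_{11}^1-\epsilon\Gamma_{22}^1+2\Gamma_{21}^2$. Applying the first identity of \eqref{eq:AffineBundle2}, $\Gamma_{21}^2=2\Gamma_{11}^1-\epsilon\Gamma_{22}^1$, produces $G_2=5\Gamma_{11}^1-3\epsilon\Gamma_{22}^1$.

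The argument is short and I do not anticipate any serious obstacle: the whole proof rests on \eqref{eq:SymmetryC2}, \eqref{eq:SumTau} and \eqref{eq:AffineBundle2}. The only point requiring care is the sign bookkeeping with $\epsilon$, both when invoking \eqref{eq:SumTau} to replace $\tau_2^2$ by $-\tau_1^1$ and when clearing the factors of $\epsilon$ appearing in \eqref{eq:AffineBundle2}.
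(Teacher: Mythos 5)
Your proof is correct and follows essentially the same route as the paper: rewrite the $\tau$-terms via \eqref{eq:SymmetryC2} together with \eqref{eq:SumTau} to get $G_1=\Gamma_{22}^2-\epsilon\Gamma_{11}^2+2\Gamma_{12}^1$ (and the analogous expression for $G_2$), then apply \eqref{eq:AffineBundle2}. The sign bookkeeping with $\epsilon$ all checks out.
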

\begin{proof}
We shall prove the above formula for $G_1$, the proof for $G_2$ is similar.
We have 
$$
G_1=\Gamma_{22}^2-\epsilon\Gamma_{11}^2+\tau_1^1(X_2)-\epsilon\tau_1^2(X_1)=\Gamma_{22}^2-\epsilon\Gamma_{11}^2+2\Gamma_{12}^1,
$$
where we have used formulas \eqref{eq:SymmetryC2} and \eqref{eq:SumTau}. 
Now using equations \eqref{eq:AffineBundle2}, we obtain the desired formula. 
\end{proof}

\begin{lemma}
We have that
\begin{equation}
\left[
\begin{array}{c}
\overline{G}_1\\
\overline{G}_2
\end{array}
\right]
=R_{\epsilon}(-\epsilon\theta)
\left[
\begin{array}{c}
G_1\\
G_2
\end{array}
\right]
+3
\left[
\begin{array}{cc}
-\epsilon & 0 \\
0 & 1
\end{array}
\right]
R_{\epsilon}(\theta)
\left[
\begin{array}{c}
d\theta(X_1)\\
d\theta(X_2)
\end{array}
\right].
\end{equation}
\end{lemma}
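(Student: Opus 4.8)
The plan is to reduce the statement to the transformation law of the Christoffel symbols $\Gamma_{ij}^k$ of $\nabla$ under the frame change \eqref{eq:ChangeFrame1}--\eqref{eq:ChangeFrame2}, and then feed this into the compact expressions for $G_1,G_2$ supplied by the preceding lemma. Since, for the affine normal plane bundle, $G_1=5\Gamma_{22}^2-3\epsilon\Gamma_{11}^2$ and $G_2=5\Gamma_{11}^1-3\epsilon\Gamma_{22}^1$ (with the identical formulas relating $\overline G_i$ to the barred symbols $\overline\Gamma_{ij}^k$), it suffices to track how the four symbols $\Gamma_{22}^2,\Gamma_{11}^2,\Gamma_{11}^1,\Gamma_{22}^1$ change.

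First I would write $Y_a=\sum_b (R_\epsilon(\theta))_{ab}X_b$ and expand $\nabla_{Y_i}Y_j$ by the Leibniz rule, remembering that $\theta$ is a function on $M$. Differentiating the coefficients produces factors $d\theta(Y_i)=\sum_a (R_\epsilon(\theta))_{ia}\,d\theta(X_a)$, while $\nabla_{X_a}X_b$ is re-expanded through \eqref{eq:defineChristoffel}; re-expressing the outcome in the $Y$-frame via $X_b=\sum_d (R_\epsilon(-\theta))_{bd}Y_d$ (note $R_\epsilon(\theta)^{-1}=R_\epsilon(-\theta)$ for both signs of $\epsilon$) then reads off each $\overline\Gamma_{ij}^k$ as a homogeneous part, assembled from products of entries of $R_\epsilon(\theta)$ with the old symbols $\Gamma_{ab}^c$, plus an inhomogeneous part assembled from products of entries of $R_\epsilon(\theta)$ with the quantities $d\theta(X_a)$. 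Throughout I would use that the affine normal bundle relations \eqref{eq:AffineBundle1}--\eqref{eq:AffineBundle2} hold in every $g$-orthonormal frame (the bundle being intrinsic), together with the relevant trigonometric or hyperbolic identities.

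Substituting into $\overline G_1,\overline G_2$ and simplifying, I expect the homogeneous contribution to collapse to $R_\epsilon(-\epsilon\theta)[G_1,G_2]^t$ and the $d\theta$ contribution to collapse to $3\left[\begin{array}{cc}-\epsilon&0\\0&1\end{array}\right]R_\epsilon(\theta)[d\theta(X_1),d\theta(X_2)]^t$, which is the claim. The main obstacle is the bookkeeping of the inhomogeneous terms: several $d\theta$ contributions arise (from differentiating $R_\epsilon(\theta)$ in both the directional slot and the coefficient of $Y_j$, across all four symbols, weighted by $5$ and $-3\epsilon$), and they must conspire into the single clean factor $3$ with the stated diagonal matrix. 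I would cross-check the final formula against Lemma \ref{lemma:rankH}: since $\overline\eta=\eta+3\theta$ gives $d\overline\eta=d\eta+3\,d\theta$, the coefficient $3$ above is exactly what renders \eqref{eq:DerivEta} frame-independent, the two laws dovetailing through the identities $R_\epsilon(\theta)\left[\begin{array}{cc}-\epsilon&0\\0&1\end{array}\right]=\left[\begin{array}{cc}-\epsilon&0\\0&1\end{array}\right]R_\epsilon(-\epsilon\theta)$ and $\left[\begin{array}{cc}-\epsilon&0\\0&1\end{array}\right]^2=I$.
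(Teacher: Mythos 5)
Your proposal is correct and follows essentially the same route as the paper: expand $\nabla_{Y_i}Y_j$ by the Leibniz rule to read off the barred Christoffel symbols (with the inhomogeneous $d\theta$ terms), substitute into the compact formulas $G_1=5\Gamma_{22}^2-3\epsilon\Gamma_{11}^2$, $G_2=5\Gamma_{11}^1-3\epsilon\Gamma_{22}^1$ valid in every $g$-orthonormal frame, and collapse the homogeneous part using the affine normal bundle relations \eqref{eq:AffineBundle1}--\eqref{eq:AffineBundle2}. The consistency check against $\overline{\eta}=\eta+3\theta$ is a nice addition but not part of the paper's argument.
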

\begin{proof}
We consider the case $\epsilon=1$, the case $\epsilon=-1$ being similar. From equations \eqref{eq:ChangeFrame1} we obtain
\begin{equation*}
\begin{array}{c}
\nabla_{Y_1}Y_1=\cos^2(\theta)\nabla_{X_1}X_1+\sin(\theta)\cos(\theta)(\nabla_{X_1}X_2+\nabla_{X_2}X_1)+\sin^2(\theta)\nabla_{X_2}X_2\\
              +\left[ d\theta(X_1)\cos(\theta)+d\theta(X_2)\sin(\theta) \right]Y_2
\end{array}
\end{equation*}
\begin{equation*}
\begin{array}{c}
\nabla_{Y_2}Y_2=\sin^2(\theta)\nabla_{X_1}X_1-\sin(\theta)\cos(\theta)(\nabla_{X_1}X_2+\nabla_{X_2}X_1)+\cos^2(\theta)\nabla_{X_2}X_2\\
              -\left[ -d\theta(X_1)\sin(\theta)+d\theta(X_2)\cos(\theta) \right]Y_1
\end{array}
\end{equation*}
Using again equations \eqref{eq:ChangeFrame1} we obtain
$$
\overline{G}_1=-3\left(\cos(\theta)d\theta(X_1)+\sin(\theta)d\theta(X_2)\right)+\cos^3(\theta)G_1-\sin^3(\theta)G_2+
$$
$$
+5\left[\sin^2(\theta)\cos(\theta)(\Gamma_{11}^2+\Gamma_{12}^1+\Gamma_{21}^1)-\sin(\theta)\cos^2(\theta)(\Gamma_{22}^1+\Gamma_{12}^2+\Gamma_{21}^2) \right]
$$
$$
-3\left[\sin^2(\theta)\cos(\theta)(\Gamma_{22}^2-\Gamma_{12}^1-\Gamma_{21}^1)+\sin(\theta)\cos^2(\theta)(\Gamma_{21}^2+\Gamma_{12}^2-\Gamma_{11}^1) \right].
$$
Using now equations \eqref{eq:AffineBundle1} and \eqref{eq:AffineBundle2} we obtain
\begin{equation*}
\overline{G}_1=\cos(\theta)G_1-\sin(\theta)G_2-3(\cos(\theta)d\theta(X_1)+\sin(\theta)d\theta(X_2))
\end{equation*}
Similar calculations leads to
\begin{equation*}
\overline{G}_2=\sin(\theta)G_1+\cos(\theta)G_2+3(-\sin(\theta)d\theta(X_1)+\cos(\theta)d\theta(X_2)),
\end{equation*}
thus proving the lemma. 
\end{proof}

\begin{corollary}
We have that
\begin{equation}
\left[
\begin{array}{c}
d\overline\eta(Y_1)+\epsilon \overline{G}_1\\
d\overline\eta(Y_2)-\overline{G}_2
\end{array}
\right]
=R_{\epsilon}(\theta)\cdot\left[
\begin{array}{c}
d\eta(X_1) +\epsilon G_1 \\
d\eta(X_2) -G_2
\end{array}
\right].
\end{equation}
\end{corollary}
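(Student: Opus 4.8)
The plan is to reduce the statement to a single matrix intertwining identity, combining two facts already established: the scalar relation $\overline{\eta}=\eta+3\theta$ from Lemma~\ref{lemma:rankH} (valid under the standing hypothesis $rank(H)=1$, which is what makes $\eta$ well defined) and the vector transformation law for $[\overline{G}_1,\overline{G}_2]^t$ from the preceding lemma. Everything then collapses after a short bookkeeping of two diagonal sign matrices.

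First I would record how an arbitrary $1$-form transforms under the change of frame. Since the rows of $R_{\epsilon}(\theta)$ are exactly the coefficients expressing $Y_1,Y_2$ in the basis $\{X_1,X_2\}$ via \eqref{eq:ChangeFrame1} and \eqref{eq:ChangeFrame2}, linearity gives, for any $1$-form $\omega$,
$$
\left[\begin{array}{c}\omega(Y_1)\\ \omega(Y_2)\end{array}\right]
=R_{\epsilon}(\theta)\left[\begin{array}{c}\omega(X_1)\\ \omega(X_2)\end{array}\right].
$$
Applying this to $\omega=d\overline{\eta}=d\eta+3\,d\theta$ (using $\overline{\eta}=\eta+3\theta$) yields
$$
\left[\begin{array}{c}d\overline{\eta}(Y_1)\\ d\overline{\eta}(Y_2)\end{array}\right]
=R_{\epsilon}(\theta)\left[\begin{array}{c}d\eta(X_1)\\ d\eta(X_2)\end{array}\right]
+3\,R_{\epsilon}(\theta)\left[\begin{array}{c}d\theta(X_1)\\ d\theta(X_2)\end{array}\right].
$$

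Next I would assemble the left-hand side of the corollary. Writing the sign pattern $(\epsilon\overline{G}_1,-\overline{G}_2)$ as $\mathrm{diag}(\epsilon,-1)\,[\overline{G}_1,\overline{G}_2]^t$ and substituting the $\overline{G}$-transformation law of the previous lemma, the key cancellation comes from $\mathrm{diag}(\epsilon,-1)\,\mathrm{diag}(-\epsilon,1)=-I$, which holds because $\epsilon^2=1$. Consequently the $d\theta$-term carried by the $\overline{G}$-law turns into $-3\,R_{\epsilon}(\theta)[d\theta(X_1),d\theta(X_2)]^t$ and exactly cancels the $+3\,R_{\epsilon}(\theta)[d\theta(X_1),d\theta(X_2)]^t$ produced above by $d\overline{\eta}$. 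What survives is
$$
R_{\epsilon}(\theta)\left[\begin{array}{c}d\eta(X_1)\\ d\eta(X_2)\end{array}\right]
+\mathrm{diag}(\epsilon,-1)\,R_{\epsilon}(-\epsilon\theta)\left[\begin{array}{c}G_1\\ G_2\end{array}\right].
$$

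The proof is then finished by the intertwining identity
$$
\mathrm{diag}(\epsilon,-1)\,R_{\epsilon}(-\epsilon\theta)=R_{\epsilon}(\theta)\,\mathrm{diag}(\epsilon,-1),
$$
which I check by direct multiplication in the two cases: for $\epsilon=1$ the reflection $\mathrm{diag}(1,-1)$ conjugates $R_{1}(-\theta)$ into $R_{1}(\theta)$, and for $\epsilon=-1$ the matrix $\mathrm{diag}(-1,-1)=-I$ is central and both sides equal $-R_{-1}(\theta)$ since $-\epsilon\theta=\theta$. Applying this identity pulls $\mathrm{diag}(\epsilon,-1)$ to the right of $R_{\epsilon}(\theta)$, and recombining the two terms gives precisely $R_{\epsilon}(\theta)\,[\,d\eta(X_1)+\epsilon G_1,\ d\eta(X_2)-G_2\,]^t$, the desired right-hand side. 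The computation is entirely mechanical; the only point that demands care is the sign bookkeeping of the two diagonal matrices together with the observation that conjugation by the reflection returns the argument $-\epsilon\theta$ to $\theta$, so no genuine obstacle arises.
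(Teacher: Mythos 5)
Your proof is correct and follows essentially the same route as the paper: both rest on $\overline{\eta}=\eta+3\theta$ from Lemma \ref{lemma:rankH}, the transformation of $1$-forms under the frame change, and the preceding lemma for $[\overline{G}_1,\overline{G}_2]^t$, with the $3\,d\theta$ contributions cancelling. The only difference is presentational: you package the sign bookkeeping into the intertwining identity $\mathrm{diag}(\epsilon,-1)R_{\epsilon}(-\epsilon\theta)=R_{\epsilon}(\theta)\mathrm{diag}(\epsilon,-1)$ and treat both values of $\epsilon$ uniformly, whereas the paper writes out the components case by case.
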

\begin{proof}
By lemma \ref{lemma:rankH}, $\overline{\eta}=\eta+3\theta$. Thus, if $\epsilon=1$,
\begin{equation*}
\begin{array}{c}
d\overline{\eta}(Y_1)=\cos(\theta)d\eta(X_1)+\sin(\theta)d\eta(X_2)+3(\cos(\theta)d\theta(X_1)+\sin(\theta)d\theta(X_2))\\
d\overline{\eta}(Y_2)=-\sin(\theta)d\eta(X_1)+\cos(\theta)d\eta(X_2)+3(-\sin(\theta)d\theta(X_1)+\cos(\theta)d\theta(X_2)),
\end{array}
\end{equation*}
which implies that
\begin{equation*}
\begin{array}{c}
d\overline{\eta}(Y_1)+\overline{G}_1=\cos(\theta)(d\eta(X_1)+G_1)+\sin(\theta)(d\eta(X_2)-G_2)\\
d\overline{\eta}(Y_2)-\overline{G}_2=-\sin(\theta)(d\eta(X_1)+G_1)+\cos(\theta)(d\eta(X_2)-G_2).
\end{array}
\end{equation*}

If $\epsilon=-1$, 
\begin{equation*}
\begin{array}{c}
d\overline{\eta}(Y_1)=\cosh(\theta)d\eta(X_1)+\sinh(\theta)d\eta(X_2)+3(\cosh(\theta)d\theta(X_1)+\sinh(\theta)d\theta(X_2))\\
d\overline{\eta}(Y_2)=\sinh(\theta)d\eta(X_1)+\cosh(\theta)d\eta(X_2)+3(\sinh(\theta)d\theta(X_1)+\cosh(\theta)d\theta(X_2)),
\end{array}
\end{equation*}
implying that
\begin{equation*}
\begin{array}{c}
d\overline{\eta}(Y_1)-\overline{G}_1=\cosh(\theta)(d\eta(X_1)-G_1)+\sinh(\theta)(d\eta(X_2)-G_2)\\
d\overline{\eta}(Y_2)-\overline{G}_2=\sinh(\theta)(d\eta(X_1)-G_1)+\cosh(\theta)(d\eta(X_2)-G_2),
\end{array}
\end{equation*}
thus proving the corollary.
\end{proof}

\section{Proof of the Main Theorem}

We begin with the following lemma:
\begin{lemma}\label{lemma:Proof}
The system of equations 
\begin{equation}\label{eq:Equiv1}
dA(X_1)=G_1B;\ dA(X_2)=-\epsilon G_2B;\ dB(X_1)=-\epsilon G_1A;\ dB(X_2)=G_2A;
\end{equation}
is equivalent to 
\begin{equation}\label{eq:Equiv2}
A^2+\epsilon B^2=c;\ d\eta(X_1)=-\epsilon G_1;\ d\eta(X_2)=G_2,
\end{equation}
for some constant $c$, where $\tan(\eta)=\frac{B}{A}$, if $\epsilon=1$, and $\tanh(\eta)=\frac{B}{A}$, if $\epsilon=-1$.
\end{lemma}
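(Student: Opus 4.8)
The plan is to treat both directions at once by first recording how $dA$ and $dB$ are coupled to $d\eta$ and to the derivative of the quantity $P:=A^2+\epsilon B^2$. Differentiating $P$ gives the single identity $dP=2\left(A\,dA+\epsilon B\,dB\right)$, while differentiating the defining relation $\tan(\eta)=B/A$ (for $\epsilon=1$) or $\tanh(\eta)=B/A$ (for $\epsilon=-1$) yields, in both cases, the uniform formula $d\eta=(A\,dB-B\,dA)/P$. Here the two cases merge precisely because the denominator $1+(B/A)^2$ coming from $\tan^{-1}$ and the denominator $1-(B/A)^2$ coming from $\tanh^{-1}$ both equal $P/A^2$. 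Note that $P\neq 0$ wherever $\eta$ is defined (for $\epsilon=-1$ the relation $\tanh(\eta)=B/A$ forces $A^2>B^2$), so all the manipulations below are legitimate; I would state this standing assumption explicitly.

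Next I would solve for $dA$ and $dB$. The two identities above form a linear system
\[
\left(\begin{array}{cc} A & \epsilon B\\ -B & A\end{array}\right)\left(\begin{array}{c} dA\\ dB\end{array}\right)=\left(\begin{array}{c}\frac{1}{2}\,dP\\ P\,d\eta\end{array}\right),
\]
whose coefficient matrix has determinant $A^2+\epsilon B^2=P\neq 0$. Inverting gives the clean formulas
\[
dA=\frac{A}{2P}\,dP-\epsilon B\,d\eta,\qquad dB=\frac{B}{2P}\,dP+A\,d\eta.
\]
These two pairs of relations---$(dP,d\eta)$ in terms of $(dA,dB)$, and conversely---are the whole content of the lemma, and the equivalence drops out by substitution, using repeatedly that $\epsilon^2=1$.

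For the implication \eqref{eq:Equiv1}$\Rightarrow$\eqref{eq:Equiv2}, I would substitute the four hypotheses into $dP=2(A\,dA+\epsilon B\,dB)$. Evaluating on $X_1$ gives $2\left(A\,G_1B+\epsilon B(-\epsilon G_1A)\right)=0$ and on $X_2$ gives $2\left(A(-\epsilon G_2B)+\epsilon B\,G_2A\right)=0$, so $P$ is locally constant, hence constant on the connected surface; call it $c$, which is the first equation of \eqref{eq:Equiv2}. Then I feed the same four hypotheses into $d\eta=(A\,dB-B\,dA)/P$: on $X_1$ the numerator is $-G_1(\epsilon A^2+B^2)=-\epsilon G_1\,P$, giving $d\eta(X_1)=-\epsilon G_1$, and on $X_2$ the numerator is $G_2(A^2+\epsilon B^2)=G_2\,P$, giving $d\eta(X_2)=G_2$.

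For the converse \eqref{eq:Equiv2}$\Rightarrow$\eqref{eq:Equiv1}, I would use that $P\equiv c$ forces $dP=0$, so the inverted formulas collapse to $dA=-\epsilon B\,d\eta$ and $dB=A\,d\eta$. Substituting $d\eta(X_1)=-\epsilon G_1$ and $d\eta(X_2)=G_2$ then reproduces all four equations of \eqref{eq:Equiv1}, with $\epsilon^2=1$ again doing the bookkeeping (e.g.\ $dA(X_1)=-\epsilon B(-\epsilon G_1)=G_1B$). There is no genuine obstacle here: the only point requiring care is the sign- and $\epsilon$-dependent bookkeeping that distinguishes the trigonometric and hyperbolic cases, and this is exactly what the unified formula for $d\eta$ and the identity $\epsilon^2=1$ are designed to absorb, so the two cases need not be run separately.
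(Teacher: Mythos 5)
Your proof is correct, and its core is the same pair of identities the paper uses: the derivative of $A^2+\epsilon B^2$ and the derivative of $\eta$ expressed through $A\,dB-B\,dA$. The forward direction is essentially identical to the paper's. Where you genuinely differ is in the organization of the converse: the paper proceeds by defining $\tilde G_1=\frac{dA(X_1)}{B}=-\epsilon\frac{dB(X_1)}{A}$ (the equality of these two quotients being a consequence of $A^2+\epsilon B^2=c$), showing $d\eta(X_1)=-\epsilon\tilde G_1$, and comparing with the hypothesis to conclude $\tilde G_1=G_1$; this implicitly divides by both $A$ and $B$ and so tacitly assumes neither vanishes. You instead package the two identities as an invertible linear system with coefficient matrix $\left(\begin{smallmatrix} A & \epsilon B\\ -B & A\end{smallmatrix}\right)$, whose determinant is $P=A^2+\epsilon B^2$, and invert it to get $dA=\frac{A}{2P}dP-\epsilon B\,d\eta$ and $dB=\frac{B}{2P}dP+A\,d\eta$. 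This buys you a converse that only needs $P\neq 0$ (which you correctly observe is forced wherever $\eta$ is defined, in both the trigonometric and hyperbolic cases), and it makes the equivalence a genuine two-way change of variables rather than two separately argued implications. Your unified formula $d\eta=(A\,dB-B\,dA)/P$ covering both values of $\epsilon$ is also a nice economy over running the $\tan$ and $\tanh$ cases separately.
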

\begin{proof}
If we assume that equations \eqref{eq:Equiv1} hold, then 
\begin{equation*}
AdA(X_1)+\epsilon BdB(X_1)=0; \ \ AdA(X_2)+\epsilon BdB(X_2)=0,
\end{equation*}
which implies $A^2+\epsilon B^2=c$, for some costant $c\neq 0$, and
\begin{equation*}
d\eta(X_1)=-\epsilon G_1;\ \ d\eta(X_2)=G_2.
\end{equation*}
On the other hand, if equations \eqref{eq:Equiv2} hold, then we can define 
\begin{equation*}
\tilde{G}_1=\frac{dA(X_1)}{B}=-\epsilon\frac{dB(X_1)}{A}
\end{equation*}
to obtain 
\begin{equation}
d\eta(X_1)=-\epsilon \tilde{G}_1
\end{equation}
and conclude that $\tilde{G}_1=G_1$. In a similar way we show that $dA(X_2)=-\epsilon G_2B;\ dB(X_2)=G_2A$,
which completes the proof of the lemma.
\end{proof}

\paragraph{Proof of theorem \ref{thm1}, part 1:}

Assume that $\Omega$ is a parallel symplectic form such that $S$ is $\Omega$-Lagrangian. Differentiating 
\begin{equation}
\Omega(X_1,X_2)=0
\end{equation}
with respect to $X_1$ and $X_2$ we obtain
\begin{equation*}
\begin{array}{c}
\Omega(D_{X_1}X_1,X_2)+\Omega(X_1,D_{X_1}X_2)=0\\
\Omega(D_{X_2}X_1,X_2)+\Omega(X_1,D_{X_2}X_2)=0,
\end{array}
\end{equation*}
which is equivalent to 
\begin{equation*}
\begin{array}{c}
\Omega(\xi_1,X_2)+\Omega(X_1,\xi_2)=0\\
\Omega(\xi_2,X_2)+\Omega(X_1,-\epsilon\xi_1)=0,
\end{array}
\end{equation*}
Write then
\begin{equation}
\Omega(X_1,\xi_2)=A,\ \Omega(X_2,\xi_1)=A; \ \ \Omega(X_1,\xi_1)=B,\ \Omega(X_2,\xi_2)=-\epsilon B; 
\end{equation}
for some functions $A$ and $B$. 

Differentiating $A$ with respect to $X_1$ in the first two equations we obtain
\begin{equation}\label{eq:DA1}
\begin{array}{c}
dA(X_1)=\left( \Gamma_{11}^1+\tau_2^2(X_1) \right) A+ \left( -\epsilon\Gamma_{11}^2 +\tau_2^1(X_1) \right) B+\Omega(\xi_1,\xi_2); \\
dA(X_1)= \left( \Gamma_{12}^2+\tau_1^1(X_1) \right)A +\left( \Gamma_{12}^1-\epsilon \tau_1^2(X_1) \right)B-\Omega(\xi_1,\xi_2).
\end{array}
\end{equation}
or equivalently
\begin{equation*}
\begin{array}{c}
(\Gamma_{11}^1+\tau_2^2(X_1)-\Gamma_{12}^2-\tau_1^1(X_1)) A +2\Omega(\xi_1,\xi_2)= (\Gamma_{12}^1-\epsilon\tau_1^2(X_1)+\epsilon\Gamma_{11}^2-\tau_2^1(X_1)) B\\
2dA(X_1)=(\Gamma_{11}^1+\Gamma_{12}^2+\tau_1^1(X_1)+\tau_2^2(X_1))A+(\Gamma_{12}^1-\epsilon\Gamma_{11}^2+\tau_2^1(X_1)-\epsilon\tau_1^2(X_1))B.
\end{array}
\end{equation*}

By using equations  \eqref{eq:SymmetryC1}, \eqref{eq:SymmetryC2}, \eqref{eq:Det1}, \eqref{eq:AffineBundle1} and \eqref{eq:AffineBundle2}, we verify that these equations are equivalent to 
\begin{equation}\label{eq:DA11}
F_{21}A+F_{22}B+4\Omega(\xi_1,\xi_2)=0
\end{equation}
and 
\begin{equation}\label{eq:DA12}
dA(X_1)=G_1 B.
\end{equation}

Differentiating $A$ with respect to $X_2$ we obtain
\begin{equation}\label{eq:DA2}
\begin{array}{c}
dA(X_2)=\left( \Gamma_{21}^1+\tau_2^2(X_2) \right) A+ \left( -\epsilon\Gamma_{21}^2 +\tau_2^1(X_2) \right) B; \\
dA(X_2)= \left( \Gamma_{22}^2+\tau_1^1(X_2) \right)A +\left( \Gamma_{22}^1-\epsilon \tau_1^2(X_2) \right)B,
\end{array}
\end{equation}
which are equivalent to
\begin{equation}\label{eq:DA21}
F_{11} A +F_{12} B=0
\end{equation}
and
\begin{equation}\label{eq:DA22}
dA(X_2)=-\epsilon G_2 B.
\end{equation}

Now differentiate $B$ with respect to $X_1$ to obtain
\begin{equation}\label{eq:DB1}
\begin{array}{c}
dB(X_1)=\left( \Gamma_{11}^2+\tau_1^2(X_1) \right) A+ \left( \Gamma_{11}^1 +\tau_1^1(X_1) \right) B; \\
-\epsilon dB(X_1)= \left( \Gamma_{12}^1+\tau_2^1(X_1) \right)A -\epsilon\left( \Gamma_{12}^2+ \tau_2^2(X_1) \right)B.
\end{array}
\end{equation}
We can verify that these equations are equivalent to  
\begin{equation}\label{eq:DB11}
F_{11} A + F_{12} B=0
\end{equation}
and
\begin{equation}\label{eq:DB12}
dB(X_1)=-\epsilon G_1 A.
\end{equation}
Differentiating $B$ with respect to $X_2$ we get
\begin{equation}\label{eq:DB2}
\begin{array}{c}
dB(X_2)=\left( \Gamma_{21}^2+\tau_1^2(X_2) \right) A+ \left( \Gamma_{21}^1 +\tau_1^1(X_2) \right) B-\Omega(\xi_1,\xi_2); \\
-\epsilon dB(X_2)= \left( \Gamma_{22}^1+\tau_2^1(X_2) \right)A -\epsilon\left( \Gamma_{22}^2+ \tau_2^2(X_2) \right)B-\epsilon\Omega(\xi_1,\xi_2),
\end{array}
\end{equation}
which are equivalent to 
\begin{equation}\label{eq:DB21}
F_{21} A +F_{22} B-4\Omega(\xi_1,\xi_2)=0
\end{equation}
and
\begin{equation}\label{eq:DB22}
dB(X_2)=G_2 A.
\end{equation}

From equations \eqref{eq:DA11} and \eqref{eq:DB21} we conclude that $\Omega(\xi_1,\xi_2)=0$. 
It follows that equations \eqref{eq:DA11}, \eqref{eq:DA21}, \eqref{eq:DB11} and \eqref{eq:DB21} are reduced to
\begin{equation*}\label{eq:KernelF}
\begin{array}{c}
F_{11}A+F_{12}B=0\\
F_{21}A+F_{22}B=0,
\end{array}
\end{equation*}
which says that $[A,B]^t$ belongs to the kernel of $F$.

Differentiating $\Omega(\xi_1,\xi_2)=0$ we obtain
\begin{equation}
\begin{array}{c}
\lambda_{11}^1A-\lambda_{11}^2\epsilon B-\lambda_{21}^1B-\lambda_{21}^2A=0\\
\lambda_{12}^1A-\lambda_{12}^2\epsilon B-\lambda_{22}^1B-\lambda_{22}^2A=0,
\end{array}
\end{equation}
which can be written as 
\begin{equation*}\label{eq:KernelL}
\begin{array}{c}
L_{11}A+L_{12}B=0\\
L_{21}A+L_{22}B=0.
\end{array}
\end{equation*}
We conclude that $[A,B]^t$ belongs to the kernel of $L$ and hence $rank(H)<2$. 

Finally, by lemma \ref{lemma:Proof}, equations \eqref{eq:DA12}, \eqref{eq:DA22}, \eqref{eq:DB12} and \eqref{eq:DB22} are equivalent to 
$A^2+\epsilon B^2=c$, for some constant $c$ and to equations \eqref{eq:DerivEta}. Equation $A^2+\epsilon B^2=c$ implies $\Omega\wedge\Omega=c[\cdot,\cdot,\cdot,\cdot]$.

\bigskip\bigskip

\paragraph{Proof of theorem \ref{thm1}, part 2:}

Assume that $rank(H)=1$ and equations \eqref{eq:DerivEta} hold. Denote by
$[A,B]^t$ a column-vector in $Ker(H)$ satisfying $A^2+\epsilon B^2=c$, for some constant $c\neq 0$. Define the symplectic form $\Omega$ by the conditions
\begin{equation*}
\begin{array}{c}
\Omega(X_1,X_2)=\Omega(\xi_1,\xi_2)=0\\
\Omega(X_1,\xi_2)=\Omega(X_2,\xi_1)=A\\
\Omega(X_1,\xi_1)=-\epsilon \Omega(X_2,\xi_2)=B
\end{array}
\end{equation*}
We shall prove that the symplectic form $\Omega$ is parallel.

Observe first that
\begin{equation*}
\begin{array}{c}
D_{X_1}\Omega(X_1,X_2)=-A+A=0\\
D_{X_2}\Omega(X_1,X_2)=-B+ B=0.
\end{array}
\end{equation*}
Moreover
\begin{equation*}
\begin{array}{c}
D_{X_1}\Omega(\xi_1,\xi_2)=L_{11}A+L_{12}B=0\\
D_{X_2}\Omega(\xi_1,\xi_2)=L_{21}A+L_{22}B=0.
\end{array}
\end{equation*}
We must prove now that $(D_{X_k}\Omega)(X_i,\xi_j)=0$, for any $i,j,k=1,2$. We shall prove for $(i,j,k)=(1,2,1)$ and $(i,j,k)=(2,1,1)$, the other cases being similar. 
We have
\begin{equation*}
\begin{array}{c}
dA(X_1)-\Gamma_{11}^1A-\Gamma_{11}^2(-\epsilon B)-\tau_2^1(X_1)B-\tau_2^2(X_1)A=0\\
dA(X_1)-\Gamma_{12}^1B-\Gamma_{12}^2A-\tau_1^1(X_1)A-\tau_1^2(X_1)(-\epsilon B)=0.
\end{array}
\end{equation*}
But, as we have seen above, this pair of equations are equivalent to
\begin{equation*}
\begin{array}{c}
F_{21}A+F_{22}B=0\\
dA(X_1)-G_1 B=0,
\end{array}
\end{equation*}
which holds by lemma \ref{lemma:Proof}.

\end{document}